\newtheorem{theorem}{Theorem}[section]
\newtheorem{lemma}[theorem]{Lemma}
\newtheorem{proposition}[theorem]{Proposition}
\newtheorem{definition}[theorem]{Definition}
\newtheorem{corollary}[theorem]{Corollary}
\theoremstyle{remark}
\begin{document}

\title{Graph colouring and the total Betti number}

\author{Alexander Engstr\"om\footnote{{\tt alexander.engstrom@aalto.fi}}}

\date\today

\maketitle

\begin{abstract}
The total Betti number of the independence complex of a graph is an intriguing graph invariant. Kalai and Meshulam have raised the question on its relation to cycles and the chromatic number of a graph, and a recent conjecture on that theme was proved by Bonamy, Charbit and Thomass\'e. We show an upper bound on the total Betti number in terms of the number of vertex disjoint cycles in a graph. The main technique is discrete Morse theory and building poset maps. Ramanujan graphs with arbitrary chromatic number and girth $\log n$ is a classical construction. We show that any subgraph of them with less than $n^{0.003}$ vertices have smaller total Betti number than some planar graph of the same order, although it is part of a graph with high chromatic number.
\end{abstract}

\section{Introduction}

Independence complexes of graphs provide an interesting link between graph theory and algebraic topology. Any (reasonable) CW-complex can be turned into an independence complex by further triangulations and studied with graph theory; conversely we can try to understand graph theoretic notions using algebraic topology.

The \emph{independence complex} $\textrm{Ind}(G)$ of a graph $G$ is the simplicial complex given by its independent sets and its total Betti number is
\[ {\mathbf{b}}(G) =  \sum_i \dim \tilde{H}_i(\textrm{Ind}(G)). \]
Many papers in both mathematics and physics have been written to understand ${\mathbf{b}}(G),$ see for example  \cite{Adam1,bin,E05,FSE05,hui,J06}.
Kalai and Meshulam \cite{Kpc}  have 
asked if the chromatic number of a graph is linked to subgraphs with high total Betti number. One beautiful trial balloon towards this was a conjecture just verified by Bonamy, Charbit and Thomass\'e \cite{bct}: Graphs with large chromatic number induce $3k$--cycles. The connection is that $ {\mathbf{b}}(C_n)$ is 2 if $n$ is divisible by three, and 1 otherwise.

In this paper we continue the study the path of understanding the connection between ${\mathbf{b}}(G)$ and the cycle and colouring structure of $G.$ An explicit corollary of our main result is that if $G$ is a graph with at most $k \geq 2$ vertex disjoint cycles, then
\[
{\mathbf{b}}(G) \leq \left( \frac{1+\sqrt{5}}{2} \right) ^ {(2+o(1)) k \log k}.
\]

Ramanujan graphs of order $n$ and chromatic number at least $\chi$ have a girth of at least order $\frac{\log n}{3 \log \chi}.$ Any subgraphs of that order would have ${\mathbf{b}}(G) \leq 1$ and be useless to detect a high chromatic number. We show that the subgraphs 
with less than
\[ \frac{ \log n }{3  \log \chi } n^{0.003( \log \chi)^{-1}} \]
vertices also have too low ${\mathbf{b}}(G),$ because there is a planar graph of the same order with higher total Betti number.

Our results on the total Betti number all builds on creating discrete Morse functions in a combinatorial way, and using that 
${\mathbf{b}}(G) \leq {\mathbf{c}}(G)$ where $ {\mathbf{c}}(G)$ is the minimal number of critical cells in an acyclic matching on the face poset of the independence complex of the graph $G.$ In Section 2 we introduce and prove basic results on how to compute inequalities for ${\mathbf{c}}(G)$ by splitting $G$ into other graphs. In Section 3 we then apply that together with techniques from Voss' proof \cite{V69} of the Erd\"os-P\'osa theorem to prove our main results, which is an exact version of the corollary stated above. This also involves some cute enumerative combinatorics on Lucas numbers. In Section 4 we recall Ramanujan graphs and insert its parameters in our general result and compare it to results on planar lattices.

\section{Discrete Morse theory}

\subsection{Basic discrete Morse theory}
In this section we review some elementary results about discrete Morse theory and introduce one piece of new notation in Definition~\ref{def:c}. A \emph{simplicial complex} on a set $V$, which in this paper always is finite, is a set $\Delta$ of subsets of $V$ that is closed under taking subsets. In particular, if $\Delta$ is not the empty set, then it contains the empty set. We regard simplicial complexes as they are, and do not follow the practice of some topologists to remove the empty set. The boolean lattice on $V$ induces a poset structure on $\Delta$ by inclusion, and we also denote the \emph{(face) poset} by $\Delta.$ An element \emph{covers} another one in a poset if it is strictly larger, but minimal with that property. For a set $S$ let ${S \choose k}$ be the set of subsets of $S$ of cardinality $k.$ A \emph{matching} $M$ on $\Delta$ is a subset of ${\Delta \choose 2}$ satisfying that
\begin{itemize}
\item[--] if $\{ \sigma, \tau \} \in M$ then $\tau$ covers $\sigma$ in the poset $\Delta$ (or the other way around)
\item[--] and if $m_1,m_2\in M$ and $m_1 \cap m_2 \neq \emptyset$ then $m_1=m_2.$
\end{itemize}
A sequence of distinct elements $\{\sigma_1 \subset \tau_1\}, \ldots, \{\sigma_n \subset \tau_n\}$ of a matching $M$ on $\Delta$ with $n>1$ is a \emph{cycle} if $\tau_i \supset \sigma_{i+1}$ for each $1\leq i < n$ and $\tau_n \supset \sigma_1.$ A matching is \emph{acyclic} (also called a Morse matching) if there are no cycles in it. 
The elements not contained in a matching are called \emph{critical}. If there are no critical elements the matching is \emph{complete}. This follows from the fundamental theorem of Discrete Morse theory.
\begin{corollary}\label{cor:dmt}
Let $\Delta$ be a simplicial complex with an acyclic matching giving $c$ critical elements. Then $\sum_i \dim \tilde{H}_i(\Delta) \leq c.$ Furthermore, if all critical elements have the same order, then it is an equality.
\end{corollary}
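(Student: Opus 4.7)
This corollary is the standard consequence of Forman's fundamental theorem of discrete Morse theory (in the acyclic-matching formulation due to Chari); the only subtlety is keeping track of the empty face, since the author includes $\emptyset \in \Delta$ and consequently writes the bound in terms of reduced homology.

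My plan is to work algebraically with the augmented simplicial chain complex of $\Delta$. An acyclic matching $M$ on the face poset determines a \emph{Morse chain complex} $C^M_\bullet$, with one free generator in degree $k$ for each critical face of dimension $k$ (the empty face, if critical, sitting in degree $-1$). The differential counts sign-weighted alternating zig-zag paths of covering relations and reversed matched pairs between critical cells; acyclicity of $M$ is precisely what ensures these paths are finite and that $d \circ d = 0$. The essential assertion is that $C^M_\bullet$ is quasi-isomorphic to the augmented simplicial chain complex of $\Delta$. A clean self-contained route is Gauss-type elimination: for each matched pair, the corresponding two-term subcomplex is acyclic and splits off, so cancelling it preserves homology while removing exactly two generators. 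Iterating over $M$ leaves precisely the critical generators.

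Once that quasi-isomorphism is in hand, the inequality
\[ \sum_i \dim \tilde{H}_i(\Delta) \;=\; \sum_i \dim H_i(C^M_\bullet) \;\leq\; \sum_i \dim C^M_i \;=\; c \]
is immediate from the general fact that the total dimension of the homology of a chain complex is at most the total dimension of its chain groups. For the equality clause, if all critical faces share a dimension $d$ then $C^M_\bullet$ is concentrated in degree $d$; the incoming and outgoing differentials have zero target or source and must vanish, so $\dim H_d(C^M_\bullet) = c$. The main (mild) obstacle is simply the bookkeeping around the empty face and the distinction between reduced and unreduced homology---everything else is formal once Forman's theorem is invoked or reproved via the elimination above.
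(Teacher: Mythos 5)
Your argument is correct and matches the paper's treatment: the paper offers no proof beyond appealing to the fundamental theorem of discrete Morse theory, and your algebraic Morse complex (Gaussian elimination of matched pairs, using acyclicity to order the cancellations) is the standard substantiation of exactly that theorem, with the empty-face/reduced-homology bookkeeping and the single-degree equality case handled correctly.
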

There are several efficient ways to construct acyclic matching and the following conceptual and elementary one was introduced in Jonsson's PhD thesis \cite{Jphd} and attributed to Bj\"orner.
\begin{lemma}\label{lem:fiber}
Let $\phi: \Delta \rightarrow Q$ be a poset map and $M=\cup_{q \in Q} M_q$ a matching on $\Delta$ with each $M_q \subseteq {\phi^{-1}(q) \choose 2}.$ If no $M_q$ contains a cycle, then $M$ is acyclic.
\end{lemma}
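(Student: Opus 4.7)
The plan is to argue by contradiction: suppose $M$ contains a cycle $\{\sigma_1 \subset \tau_1\}, \ldots, \{\sigma_n \subset \tau_n\}$ with $\tau_i \supset \sigma_{i+1}$ for $1 \leq i < n$ and $\tau_n \supset \sigma_1$, and show that this cycle must actually live inside a single fiber $\phi^{-1}(q)$, contradicting the hypothesis that each $M_q$ is acyclic.

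First I would note that because $M = \bigcup_{q \in Q} M_q$ with $M_q \subseteq \binom{\phi^{-1}(q)}{2}$, each matched pair $\{\sigma_i, \tau_i\}$ lies in some fiber, so there exist $q_i \in Q$ with $\phi(\sigma_i) = \phi(\tau_i) = q_i$. This is the only place the "matching lies inside fibers" hypothesis is used.

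Next comes the key step, which uses that $\phi$ is a poset map (i.e.\ order-preserving). From $\sigma_{i+1} \subset \tau_i$ we obtain
\[
q_{i+1} = \phi(\sigma_{i+1}) \leq \phi(\tau_i) = q_i,
\]
and cyclically $q_1 = \phi(\sigma_1) \leq \phi(\tau_n) = q_n$. Chaining these inequalities around the cycle gives
\[
q_1 \geq q_2 \geq \cdots \geq q_n \geq q_1,
\]
so all the $q_i$ are equal to a common value $q \in Q$.

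Therefore the entire alleged cycle is contained in $\phi^{-1}(q)$, and every pair $\{\sigma_i, \tau_i\}$ belongs to $M_q$. This would be a cycle of $M_q$, contradicting the assumption that no $M_q$ contains a cycle. Hence $M$ is acyclic. There is no real obstacle here; the only subtlety worth flagging is the direction of the inequality in the poset-map step, which relies on $\phi$ being order-preserving rather than merely a set map, and on the containment $\sigma_{i+1} \subset \tau_i$ being strict in one direction but producing only a weak inequality in $Q$, which is exactly what makes the cyclic chain collapse.
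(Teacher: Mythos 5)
Your proof is correct, and it is essentially the standard argument for this lemma (often called the cluster lemma): the paper itself gives no proof, citing Jonsson's thesis, and the intended argument there is exactly your observation that a cycle forces the chain $q_1 \geq q_2 \geq \cdots \geq q_n \geq q_1$ of fiber values to collapse, placing the whole cycle inside a single $M_q$.
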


\subsection{Basic independent sets and their complexes}

In this text a graph $G$ is a set of vertices $V_G$ and a set of edges $E_G \subseteq {V_G \choose 2}.$ A subset $I$ of vertices is \emph{independent} if $e \subseteq I$ for no edge $e \in E_G.$ Any subset of an independent set is also independent. The set of independent subsets of a graph $G$ is $\textrm{Ind}(G),$ and it is a simplicial complex on $V_G$ called the \emph{independence complex} of $G.$ The \emph{neighbourhood} of a vertex $v$ in a graph $G$ is $N_{G}(v)=\{u\in V_g \mid \{u,v\} \in E_G \}.$

\subsection{Morse matchings on independence complexes of graphs}

Now we derive some useful results to recursively build large acyclic matchings on independence complexes of graphs. None of these results are essentially new, see \cite{Edisc, E09, Efm, Jphd, J06}, but since we can provide better and very short proofs today, they are included.

\begin{definition}\label{def:c}
For a graph $G,$ let ${\mathbf{c}}(G)$ be the minimal number of critical cells in an acyclic matching on the independence complex of $G.$ 
\end{definition}

\begin{lemma}\label{lem:empty}
If $G$ is the graph without vertices, then ${\mathbf{c}}(G)=1.$
\end{lemma}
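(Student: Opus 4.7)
The statement is an unwrapping of definitions, so the plan is simply to identify exactly what $\textrm{Ind}(G)$ looks like and then enumerate the matchings.

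First I would note that when $V_G = \emptyset$, the only subset of $V_G$ is $\emptyset$ itself, and it is vacuously independent since no edge is contained in it. Following the convention recalled in the excerpt (we retain the empty set as a face), we get $\textrm{Ind}(G) = \{\emptyset\}$, a face poset consisting of a single element.

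Next I would examine matchings on this one-element poset. A matching is a subset of $\binom{\{\emptyset\}}{2}$, which is itself empty. Hence the only matching on $\textrm{Ind}(G)$ is $M = \emptyset$, and it is trivially acyclic. The set of critical elements is then all of $\textrm{Ind}(G)$, namely the single face $\emptyset$, giving exactly one critical cell.

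Therefore the minimum is attained uniquely and equals $1$, so ${\mathbf{c}}(G) = 1$. There is no obstacle here; the only thing worth flagging is the convention about keeping $\emptyset \in \textrm{Ind}(G)$, which the excerpt has already made explicit, so the argument is complete in one paragraph.
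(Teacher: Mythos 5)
Your proof is correct and follows essentially the same route as the paper: $\mathrm{Ind}(G)=\{\emptyset\}$ is a single element, so no matching pair exists and the unique (acyclic) matching leaves exactly one critical cell.
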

\begin{proof}
There only element of $\textrm{Ind}(G)$ is $\emptyset$ and every pair in a matching reduces the number of critical elements by two.
\end{proof}

\begin{lemma}\label{lem:isoV}
If $G$ has an isolated vertex, then ${\mathbf{c}}(G)=0.$
\end{lemma}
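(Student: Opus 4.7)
The plan is to exhibit an acyclic matching on $\textrm{Ind}(G)$ with no critical cells, using Lemma~\ref{lem:fiber}.

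Let $v$ be the isolated vertex and let $G' = G \setminus v$ (the induced subgraph on the remaining vertices). Since $v$ has no neighbours, a set $I \subseteq V_G$ is independent in $G$ if and only if $I \setminus \{v\}$ is independent in $G'$. In particular, for every $I' \in \textrm{Ind}(G')$ both $I'$ and $I' \cup \{v\}$ lie in $\textrm{Ind}(G)$, and every element of $\textrm{Ind}(G)$ has this form for a unique $I'$. I would define the poset map
\[
\phi : \textrm{Ind}(G) \rightarrow \textrm{Ind}(G'), \qquad \phi(I) = I \setminus \{v\},
\]
which is order-preserving because removing a fixed element commutes with inclusion.

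The fibre over any $I' \in \textrm{Ind}(G')$ is exactly the two-element chain $\{I',\, I' \cup \{v\}\}$, and $I' \cup \{v\}$ covers $I'$ in $\textrm{Ind}(G)$. I would therefore set $M_{I'} = \{\{I', I' \cup \{v\}\}\}$ for every $I' \in \textrm{Ind}(G')$ and let $M = \bigcup_{I'} M_{I'}$. By construction each element of $\textrm{Ind}(G)$ is matched, so $M$ has no critical elements, and each individual $M_{I'}$ is a single edge and hence contains no cycle.

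Lemma~\ref{lem:fiber} then immediately implies that $M$ is acyclic, so $\mathbf{c}(G) \leq 0$ and hence $\mathbf{c}(G)=0$. There is essentially no obstacle here: the only point to verify is that $\phi$ is a poset map and that its fibres are exactly the matched pairs, both of which follow directly from $v$ being isolated.
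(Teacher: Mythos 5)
Your proof is correct and uses exactly the same construction as the paper: the poset map $\phi : \textrm{Ind}(G) \to \textrm{Ind}(G \setminus v)$ given by $I \mapsto I \setminus \{v\}$, matching each fibre $\{I', I' \cup \{v\}\}$, and invoking Lemma~\ref{lem:fiber}. You have simply spelled out the verifications in more detail.
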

\begin{proof}
Let $v$ be the isolated vertex. Consider the poset map $\phi:\textrm{Ind}(G) \rightarrow \textrm{Ind}(G\setminus v)$ by $I \mapsto I\setminus \{v\}.$ Then $\phi^{-1}(I)=\{I,I\cup \{v\}\}$ and we set $M_I=\{\{I,I\cup \{v\}\}\}$ to get a complete acyclic matching on $\textrm{Ind}(G)$ by Lemma~\ref{lem:fiber}.
\end{proof}

\begin{lemma}\label{lem:link}
If $G$ is a graph with a vertex $u,$ then ${\mathbf{c}}(G)\leq {\mathbf{c}}(G \setminus u) + {\mathbf{c}}(G \setminus (\{u\} \cup N(u))).$
\end{lemma}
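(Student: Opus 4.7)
The plan is to apply the fiber lemma (Lemma~\ref{lem:fiber}) to a two-element target poset that separates independent sets according to whether or not they contain the distinguished vertex $u$.

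First I would define $\phi:\textrm{Ind}(G)\to\{0<1\}$ by $\phi(I)=1$ if $u\in I$ and $\phi(I)=0$ otherwise. This is a poset map because enlarging an independent set can only add, never remove, the vertex $u$. The fibre $\phi^{-1}(0)$ is precisely $\textrm{Ind}(G\setminus u)$, considered as a subposet of $\textrm{Ind}(G)$, and the cover relations in this subposet agree with the cover relations inherited from $\textrm{Ind}(G)$. The fibre $\phi^{-1}(1)$ consists of those independent sets of the form $I\cup\{u\}$ where $I\subseteq V_G\setminus(\{u\}\cup N(u))$ is independent in $G$; the map $I\cup\{u\}\mapsto I$ is a poset isomorphism between $\phi^{-1}(1)$ and $\textrm{Ind}(G\setminus(\{u\}\cup N(u)))$, and again this isomorphism preserves cover relations.

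Next I would pull back optimal acyclic matchings. By definition of ${\mathbf{c}}$, the complex $\textrm{Ind}(G\setminus u)$ carries an acyclic matching $M_0$ with ${\mathbf{c}}(G\setminus u)$ critical cells, and $\textrm{Ind}(G\setminus(\{u\}\cup N(u)))$ carries an acyclic matching with ${\mathbf{c}}(G\setminus(\{u\}\cup N(u)))$ critical cells. Transport the latter through the above isomorphism to obtain an acyclic matching $M_1$ on $\phi^{-1}(1)$ with the same number of critical cells. Each $M_q\subseteq\binom{\phi^{-1}(q)}{2}$ consists of cover pairs in $\textrm{Ind}(G)$ by the previous paragraph, and contains no cycles by assumption.

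Applying Lemma~\ref{lem:fiber} to $M_0\cup M_1$ yields an acyclic matching on $\textrm{Ind}(G)$. Its critical cells are exactly those critical in $M_0$ together with those critical in $M_1$, totalling ${\mathbf{c}}(G\setminus u)+{\mathbf{c}}(G\setminus(\{u\}\cup N(u)))$, which bounds ${\mathbf{c}}(G)$. There is no real obstacle here; the only bookkeeping point worth checking is that covers inside each fibre really are covers in the ambient poset $\textrm{Ind}(G)$, so that the matchings $M_0$ and $M_1$ meet the hypotheses of Lemma~\ref{lem:fiber}.
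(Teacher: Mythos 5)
Your proposal is correct and follows essentially the same route as the paper: the map $\phi(I)=I\cap\{u\}$ onto a two-element chain, identification of the two fibres with $\textrm{Ind}(G\setminus u)$ and $\textrm{Ind}(G\setminus(\{u\}\cup N(u)))$, and gluing optimal acyclic matchings on the fibres via Lemma~\ref{lem:fiber}. The extra bookkeeping you mention (covers in a fibre being covers in the ambient poset) is a fine detail the paper leaves implicit.
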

\begin{proof}
Let $Q=\{\emptyset \subset \{u\}\}.$ There is a poset map $\phi: \textrm{Ind}(G) \rightarrow Q$ by $I \mapsto I \cap \{u\}.$ The fiber $\phi^{-1}(\emptyset)$ is $\textrm{Ind}(G \setminus u).$ The fiber $\phi^{-1}(\{u\})$ is isomorphic to $\textrm{Ind}(G \setminus (\{u\} \cup N(u)))$ by $I \mapsto I \setminus \{u\}.$ We compose the matchings by Lemma~\ref{lem:fiber}.
\end{proof}

\begin{lemma}\label{lem:isoE}
Let $G$ be a graph with an isolated edge $e$ and $H$ the induced subgraph of $G$ without $e.$
Then ${\mathbf{c}}(G)\leq {\mathbf{c}}(H).$
\end{lemma}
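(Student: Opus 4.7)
The plan is to apply Lemma~\ref{lem:link} directly to one endpoint of the isolated edge and then invoke Lemma~\ref{lem:isoV} to kill one of the two resulting terms. Let me write $e=\{u,v\}$. Since $e$ is an isolated edge, both $u$ and $v$ have degree one, with $N_G(u)=\{v\}$ and $N_G(v)=\{u\}$. In particular, $G\setminus(\{u\}\cup N_G(u))=G\setminus\{u,v\}$, and because the induced subgraph $H$ of $G$ on $V_G\setminus\{u,v\}$ is obtained precisely by deleting the two vertices of $e$, this is the same as $H$.

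First I would invoke Lemma~\ref{lem:link} with the chosen vertex $u$, which gives
\[
{\mathbf{c}}(G)\leq {\mathbf{c}}(G\setminus u) + {\mathbf{c}}(G\setminus(\{u\}\cup N_G(u))) = {\mathbf{c}}(G\setminus u) + {\mathbf{c}}(H).
\]
Then I would observe that in $G\setminus u$ the vertex $v$ has no remaining neighbours, because its unique neighbour in $G$ was $u$. Therefore $v$ is isolated in $G\setminus u$, and Lemma~\ref{lem:isoV} yields ${\mathbf{c}}(G\setminus u)=0$. Substituting this into the inequality finishes the proof.

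There is no real obstacle here: the statement is essentially a specialization of Lemma~\ref{lem:link}, combined with the observation that removing one endpoint of an isolated edge creates an isolated vertex, which by Lemma~\ref{lem:isoV} contributes nothing. The only thing to be careful about is interpreting ``induced subgraph of $G$ without $e$'' as the induced subgraph on $V_G\setminus\{u,v\}$ (otherwise, removing only the edge would create two isolated vertices and already force ${\mathbf{c}}=0$, making the statement vacuous).
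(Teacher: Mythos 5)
Your proof is correct and is essentially identical to the paper's: both apply Lemma~\ref{lem:link} to an endpoint $u$ of the isolated edge, observe that $v$ becomes isolated in $G\setminus u$ so that Lemma~\ref{lem:isoV} kills that term, and identify $G\setminus(\{u\}\cup N(u))$ with $H$. Your closing remark about how to read ``induced subgraph of $G$ without $e$'' matches the intended reading as well.
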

\begin{proof}
Let the edge be $uv.$ Employ Lemma~\ref{lem:link} and note that ${\mathbf{c}}(G\setminus u)=0$ by Lemma~\ref{lem:isoV} since $v$ is an isolated vertex and that $H=G \setminus (\{u\} \cup N(u)).$
\end{proof}

\begin{lemma}[The fold lemma]\label{lem:fold}
If $u\neq v$ are vertices of $G$ and $N(v) \subseteq N(u)$ then ${\mathbf{c}}(G)\leq {\mathbf{c}}(G \setminus u).$
\end{lemma}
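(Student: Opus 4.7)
The plan is to invoke Lemma~\ref{lem:link} for the vertex $u$, which immediately yields
$$\mathbf{c}(G) \leq \mathbf{c}(G \setminus u) + \mathbf{c}(G \setminus (\{u\} \cup N(u))),$$
so the whole task reduces to showing that the second summand vanishes. To achieve this I would argue that the hypothesis $N(v) \subseteq N(u)$ forces $v$ to survive in the induced subgraph $H := G \setminus (\{u\} \cup N(u))$ and to be isolated there, which will let me conclude by Lemma~\ref{lem:isoV}.

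First I would verify that $\{u,v\}$ is not an edge: if $u \in N(v)$, then the containment $N(v) \subseteq N(u)$ would give the forbidden self-loop $u \in N(u)$. Combined with $u \neq v$, this shows $v$ still belongs to $H$. Next, the neighbourhood of $v$ in $H$ is $N_G(v) \setminus (\{u\} \cup N_G(u))$, which is empty precisely because $N_G(v) \subseteq N_G(u)$. Hence $v$ is isolated in $H$, so Lemma~\ref{lem:isoV} gives $\mathbf{c}(H)=0$, and plugging this back into the inequality from Lemma~\ref{lem:link} finishes the proof.

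There is no substantial obstacle here: the only thing to be careful about is the little self-loop observation that eliminates the edge $\{u,v\}$, and this is immediate from the domination hypothesis together with the absence of loops. The lemma is essentially a one-line combination of the two previously established results.
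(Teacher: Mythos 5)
Your proof is correct and follows exactly the paper's argument: apply Lemma~\ref{lem:link} at $u$ and note that $v$ is isolated in $G \setminus (\{u\} \cup N(u))$, so Lemma~\ref{lem:isoV} kills the second term. You merely spell out the small verification (that $\{u,v\}$ cannot be an edge, so $v$ survives) which the paper leaves implicit.
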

\begin{proof}
Note that $G \setminus (\{u\} \cup N(u))$ has the isolated vertex $v$ and use Lemma~\ref{lem:isoV} and Lemma~\ref{lem:link}.
\end{proof}

\begin{proposition}\label{lem:forest}
If $G$ is a forest then ${\mathbf{c}}(G)\leq 1.$
\end{proposition}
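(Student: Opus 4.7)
The plan is to induct on $|V_G|$, using the lemmas just proved to peel off a leaf at each step. The base case $|V_G|=0$ is exactly Lemma~\ref{lem:empty}, giving ${\mathbf{c}}(G)=1$.

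For the inductive step, suppose $G$ is a nonempty forest and assume the bound holds for all forests with fewer vertices. First I would look for an isolated vertex: if one exists, Lemma~\ref{lem:isoV} immediately gives ${\mathbf{c}}(G)=0\leq 1$. Otherwise every vertex has degree at least one, so each connected component of $G$ is a tree on at least two vertices and therefore contains a leaf; pick such a leaf $v$ and call its unique neighbour $u$.

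I would then split on $\deg(u)$. If $\deg(u)=1$, the edge $\{u,v\}$ is an entire connected component of $G$, i.e.\ an isolated edge, so Lemma~\ref{lem:isoE} reduces the problem to the induced subforest on $V_G\setminus\{u,v\}$, which has strictly fewer vertices and hence ${\mathbf{c}}\leq 1$ by induction. If $\deg(u)\geq 2$, pick any neighbour $w\neq v$ of $u$; then $N(v)=\{u\}\subseteq N(w)$, so the fold lemma (Lemma~\ref{lem:fold}) yields ${\mathbf{c}}(G)\leq {\mathbf{c}}(G\setminus w)$, and $G\setminus w$ is again a forest on strictly fewer vertices, to which the inductive hypothesis applies.

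There is no real obstacle here, since the lemmas have been engineered precisely for this kind of reduction. The only subtlety worth flagging is that the isolated-edge case genuinely needs its own treatment: the fold lemma requires a vertex $w\neq v$ with $u\in N(w)$, and such a $w$ simply does not exist when $\{u,v\}$ is a whole component, so Lemma~\ref{lem:isoE} is doing irreplaceable work in exactly that degenerate situation.
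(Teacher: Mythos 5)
Your proof is correct and essentially identical to the paper's: the paper also splits into the isolated-edge case (Lemma~\ref{lem:isoE}) and otherwise folds away a second neighbour of the leaf's neighbour via Lemma~\ref{lem:fold}, with Lemmas~\ref{lem:empty} and~\ref{lem:isoV} covering the degenerate cases. The only cosmetic difference is that the paper inducts on the number of edges rather than vertices.
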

\begin{proof}
We do induction on the number of edges of $G.$ 
First the base case of no edges in $G.$ Then it follows from Lemma~\ref{lem:empty} and Lemma~\ref{lem:isoV}.

Now the case of edges in $G.$ If there is an isolated edge then it follows from Lemma~\ref{lem:isoE} and induction. If there are no isolated edges, then there is a vertex $v$ that is a leaf of the forest $G$ incident to an edge $vw.$ The vertex $w$ is incident to another edge $wu$ because otherwise $vw$ would be an isolated edge. Now $N(v) \subseteq N(u)$ and by Lemma~\ref{lem:fold} the vertex $u$ can be folded away, and we are done by induction.
\end{proof}

This is a slightly weaker version than the main theorem of \cite{E09}.
\begin{proposition}\label{prop:feedback}
Let $G$ be a graph with a subset $U$ of vertices whose deletion removes all cycles. Then 
${\mathbf{c}}(G)\leq 2^{\# U}.$
\end{proposition}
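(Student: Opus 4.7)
My plan is to prove this by induction on $\#U$, with Proposition~\ref{lem:forest} serving as the base case.

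For the base case $\#U=0$, the hypothesis says $G$ itself has no cycles, so $G$ is a forest and Proposition~\ref{lem:forest} gives ${\mathbf{c}}(G) \leq 1 = 2^0$.

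For the inductive step, I would pick an arbitrary vertex $u \in U$ and apply Lemma~\ref{lem:link} to obtain
\[ {\mathbf{c}}(G) \leq {\mathbf{c}}(G \setminus u) + {\mathbf{c}}(G \setminus (\{u\} \cup N(u))). \]
The key observation is that both graphs on the right are obtained from $G$ by deleting more vertices than just $u$, so removing $U \setminus \{u\}$ from either of them still destroys all cycles (any cycle would already have been a cycle in $G \setminus U$). Hence each of them admits a feedback vertex set of size at most $\#U - 1$, and the inductive hypothesis bounds each summand by $2^{\#U - 1}$, giving the desired $2^{\#U}$.

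There is no real obstacle here; the only thing to check carefully is the monotonicity of the feedback-set property under induced subgraphs, which is immediate since any cycle in an induced subgraph of $G \setminus (U \setminus \{u\})$ would lift to a cycle in $G \setminus U$. The proof is short enough that it essentially writes itself once the $\#U = 0$ case is identified as Proposition~\ref{lem:forest} and Lemma~\ref{lem:link} is applied to a vertex of $U$.
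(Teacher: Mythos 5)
Your proof is correct and is essentially the paper's argument: both repeatedly apply Lemma~\ref{lem:link} to the vertices of $U$ and finish by invoking Proposition~\ref{lem:forest} on the resulting forests. The paper writes out the full $2^{\#U}$-term expansion in a single inequality, whereas you package the same iteration as an induction on $\#U$; the content is identical.
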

\begin{proof}
Employ Lemma~\ref{lem:link} for every vertex in $U$ to get
$ \mathbf{c}(G) \leq \sum_{S \subseteq U} \mathbf{c}
( G \setminus  ( U \cup_{u\in S} N(u) ) )
\leq \sum_{S \subseteq U} 1 = 2^{\# U}, $
where all the $ \mathbf{c}() \leq 1$ are derived from Proposition~\ref{lem:forest}.
\end{proof}

The following useful lemma does in a sense extend the previous ones about isolated vertices and edges
that had straightforward proofs.

\begin{lemma}\label{lem:disjoint}
Let $G$ and $H$ be graphs and $G \sqcup H$ their disjoint union. Then 
${\mathbf{c}}(G \sqcup H) \leq {\mathbf{c}}(G) {\mathbf{c}}(H).$
\end{lemma}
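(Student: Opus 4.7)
The plan is to build an acyclic matching on $\textrm{Ind}(G \sqcup H)$ with exactly $\mathbf{c}(G)\mathbf{c}(H)$ critical cells by combining optimal matchings $M_G$ on $\textrm{Ind}(G)$ and $M_H$ on $\textrm{Ind}(H)$. The underlying structural fact is that every independent set of $G \sqcup H$ decomposes uniquely as $I \sqcup J$ with $I \in \textrm{Ind}(G)$ and $J \in \textrm{Ind}(H)$, so the face poset of $\textrm{Ind}(G \sqcup H)$ is the componentwise product of the two face posets. In this product, covers are of two kinds: horizontal $(I, J) \subset (I', J)$ coming from a cover in $\textrm{Ind}(G)$, and vertical $(I, J) \subset (I, J')$ coming from a cover in $\textrm{Ind}(H)$.

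The matching $M$ I would define has two types of pairs: for every $\{I_1, I_2\} \in M_G$ and every $J \in \textrm{Ind}(H)$, include the horizontal pair $\{(I_1, J), (I_2, J)\}$; for every $I$ that is critical under $M_G$ and every $\{J_1, J_2\} \in M_H$, include the vertical pair $\{(I, J_1), (I, J_2)\}$. A quick case check shows each cell is in at most one pair, and the unmatched cells are precisely the products of $M_G$-critical and $M_H$-critical cells, giving the desired count $\mathbf{c}(G)\mathbf{c}(H)$.

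To establish acyclicity I would invoke Lemma~\ref{lem:fiber} via the quotient poset $Q$ obtained from $\textrm{Ind}(G)$ by identifying each $M_G$-matched pair to a single class. Acyclicity of $M_G$ is exactly what is needed for the induced relation on $Q$ to be antisymmetric, so $Q$ is a genuine poset and $\phi \colon \textrm{Ind}(G \sqcup H) \to Q$ defined by $(I, J) \mapsto [I]$ is a poset map. Over a singleton class $[I]$ the fiber is $\{I\} \times \textrm{Ind}(H)$ with the matching $M_H$, which is acyclic by hypothesis. Over a two-element class $[I_1, I_2]$ the fiber carries only the horizontal pairs $\{(I_1, J), (I_2, J)\}$; since a cover step in the product changes total cardinality by exactly one while $|I_2|=|I_1|+1$, any cycle would force $J$ to stay fixed and hence repeat a pair, contradicting distinctness.

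The only real subtlety is the construction of the quotient poset $Q$ and the observation that acyclicity of $M_G$ is exactly what makes it a well-defined poset; both are standard but deserve an explicit mention. Once $Q$ is in place, the combined matching is acyclic by Lemma~\ref{lem:fiber}, and the inequality $\mathbf{c}(G \sqcup H) \leq \mathbf{c}(G)\mathbf{c}(H)$ follows directly from Definition~\ref{def:c}.
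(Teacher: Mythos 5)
Your proposal is correct and takes essentially the same approach as the paper: both build the quotient poset from one factor's acyclic matching (you use $\textrm{Ind}(G)/M_G$, the paper uses $\textrm{Ind}(H)/N$), invoke Lemma~\ref{lem:fiber}, put a perfect matching on each fiber over a two-element class and the other factor's matching on each fiber over a critical singleton, and read off the critical cells as products of critical cells. The swap of which factor gets quotiented is immaterial by symmetry of the disjoint union.
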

\begin{proof}

Let $M$ be a matching on $\textrm{Ind}(G)$ with ${\mathbf{c}}(G)$ critical cells and $N$ a matching on $\textrm{Ind}(H)$ with ${\mathbf{c}}(H)$ critical cells.

For a simplicial complex $\Delta$ with an acyclic matching $L$ we construct a new poset $\Delta/L$ by identifying all elements matched by $L.$ This becomes a poset when $L$ is acyclic without further identifications exactly when $L$ is acyclic. Note that there is a natural poset map $\Delta \rightarrow \Delta/L$ whose fibers are critical elements and matchings.

As sets (and  posets) $\textrm{Ind}(G \cup H)=\textrm{Ind}(G) \times \textrm{Ind}(H).$ Consider the poset map $\phi : \textrm{Ind}(G \cup H) \rightarrow \textrm{Ind}(H)/N$ given by composing the forgetful map $\textrm{Ind}(G \cup H) \rightarrow \textrm{Ind}(H)$ and the quotient map $\textrm{Ind}(H) \rightarrow \textrm{Ind}(H)/N.$ For any matching $m=\{J \subset J \cup\{v\}\}$ the fiber $\phi^{-1}(m)$ is isomorphic to $\textrm{Ind}(\{v\} \cup H)$ by $I \mapsto I\setminus J.$ By Lemma~\ref{lem:isoV} there is a complete matching on $\textrm{Ind}(\{v\} \cup H)$ since $v$ is an isolated vertex. For any critical element $J$ of $\textrm{Ind}(H)$ under the matching $N$ the  fiber $\phi^{-1}(J)$ is isomorphic to $\textrm{Ind}(G)$ by the map $I \mapsto I \setminus J.$ For $\textrm{Ind}(G)$ we have a matching $M$ given.

With this construction every pair of critical elements of $\textrm{Ind}(G)$ and $\textrm{Ind}(H)$ gives exactly one critical element of $\textrm{Ind}(G \cup H)$ by taking their union, and those are all critical elements.
\end{proof}

\section{Independence complexes of graphs with a given number of vertex disjoint cycles}

We start by deriving an enumerative lemma required later on. The \emph{Fibonacci numbers} are defined by 
$\varphi(0)=0, \varphi(1)=1$ and $\varphi(n)=\varphi(n-1)+\varphi(n-2)$ for any integers $n,$ and the \emph{Lucas numbers} are defined by $\ell(n)=\varphi(n-1)+\varphi(n+1).$ We have defined these numbers for all integers, including the negative ones. 
For $n>1$ the Lucas number $\ell(n)$ is the integer closest to $(\frac{1 + \sqrt{5}}{2})^n.$ Note that $\varphi(n)\geq 0$ for $n\geq -1$ and the recursion can be formulated by matrices as
\[ \left(
\begin{array}{cc}
\varphi(n+1) & \varphi(n) \\
\varphi(n) & \varphi(n-1) \\
\end{array}
\right)
=
\left(
\begin{array}{cc}
1 & 1 \\
1 & 0 \\
\end{array}
\right)^n.
\]
The Lucas numbers enumerates matchings on cycles and this can be viewed as a generalisation of that.
\begin{lemma}\label{lemma:postLucas}
If $s=(s_1,s_2,\ldots,s_n)$ is a sequence of $n\geq 2$ numbers from $\{0,1,2\}$ then the cardinality of
\[
\{(x_1,\ldots,x_n)\in \{0,1\}^n
\mid
x_1+x_2\neq s_1,\ldots, x_{n-1}+x_n\neq s_{n-1}\textrm{ and }x_n+x_1\neq s_n
\}
\]
is at most $\ell(n).$
\end{lemma}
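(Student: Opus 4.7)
The plan is to rewrite the cardinality as a cyclic trace of $2 \times 2$ transfer matrices and induct on $n$ using the golden-ratio relation. Set
\[
T_0 = \begin{pmatrix} 0 & 1 \\ 1 & 1 \end{pmatrix}, \quad T_1 = \begin{pmatrix} 1 & 0 \\ 0 & 1 \end{pmatrix}, \quad T_2 = \begin{pmatrix} 1 & 1 \\ 1 & 0 \end{pmatrix},
\]
so that $(T_s)_{xy} = [x + y \neq s]$ for $x, y \in \{0, 1\}$. The cardinality in question equals $\mathrm{tr}(T_{s_1} T_{s_2} \cdots T_{s_n})$, because this trace counts the closed walks $x_1 \to x_2 \to \cdots \to x_n \to x_1$ on $\{0, 1\}$ whose $i$-th step respects $x_i + x_{i+1} \neq s_i$. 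Two identities drive the whole argument: $T_1 = I$, and $T_s^2 = T_s + I$ for $s \in \{0, 2\}$ (both $T_0$ and $T_2$ are annihilated by the golden-ratio polynomial $t^2 - t - 1$).

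I would then prove the bound $\mathrm{tr}(T_{s_1} \cdots T_{s_n}) \leq \ell(n)$ by strong induction on $n$. The base case $n = 2$ is a direct check of the nine possible sequences (the maximum count is $3 = \ell(2)$). For the inductive step $n \geq 3$, I first dispose of the case that some $s_i = 1$: then $T_{s_i} = I$ drops out of the cyclic product, leaving a cyclic trace of length $n - 1$, so the inductive bound $\ell(n-1) \leq \ell(n)$ finishes it.

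Otherwise $s \in \{0, 2\}^n$, and if there is a cyclically adjacent pair with $s_i = s_{i+1}$, the relation $T_{s_i}^2 = T_{s_i} + I$ yields
\[
\mathrm{tr}(T_{s_1} \cdots T_{s_i} T_{s_{i+1}} \cdots T_{s_n}) = \mathrm{tr}(T_{s_1} \cdots T_{s_i} T_{s_{i+2}} \cdots T_{s_n}) + \mathrm{tr}(T_{s_1} \cdots T_{s_{i-1}} T_{s_{i+2}} \cdots T_{s_n}),
\]
i.e.\ cyclic traces of lengths $n - 1$ and $n - 2$; induction and the Lucas recursion $\ell(n) = \ell(n-1) + \ell(n-2)$ close this case (when $n = 3$ the length-$1$ summand is handled by the direct identity $\mathrm{tr}(T_s) = 1 = \ell(1)$ for $s \in \{0, 2\}$).

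The remaining possibility is that $s \in \{0, 2\}^n$ is strictly alternating, so $n$ is even and, after rotation, $s = (0, 2, 0, 2, \ldots, 0, 2)$. A one-line induction on $k$ shows $(T_0 T_2)^k = \left(\begin{smallmatrix} 1 & 0 \\ 2k & 1 \end{smallmatrix}\right)$, whence the trace is $2$, which is at most $\ell(n)$ since $\ell(n) \geq 3$ for $n \geq 2$. The only real obstacle is spotting this strictly alternating case: it falls outside the reach of the Lucas-style splitting and must be dispatched by a direct computation, but once isolated it is immediate.
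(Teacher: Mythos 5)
Your proof is correct, but its core argument is genuinely different from the paper's. Both start the same way: encode the count as the trace of a cyclic product of transfer matrices (your $T_0$, $T_2$ are exactly the paper's two matrices and $T_1=I$), eliminate any $s_i=1$ by dropping the identity factor, and verify $n=2$ directly. From there the paper proceeds by a rearrangement inequality in the spirit of Golden--Thompson: an explicit matrix identity shows that bundling all factors of the same type together can only increase the trace (the difference of the two products is entrywise nonnegative), which reduces the problem to traces of the form ``power of one matrix times power of the other''; as the exponent varies these traces form a row of the Lucas triangle, maximised at the boundary where the value is $\ell(n)$. You instead exploit that both $T_0$ and $T_2$ satisfy $T^2=T+I$ (Cayley--Hamilton for $t^2-t-1$), so a cyclically adjacent repeated symbol splits the trace into cyclic traces of lengths $n-1$ and $n-2$, and strong induction rides the Lucas recursion $\ell(n)=\ell(n-1)+\ell(n-2)$, with the small lengths $1$ and $2$ checked by hand; the only sequences this reduction cannot touch are the strictly alternating ones (forcing $n$ even), which you dispatch by the computation that $T_0T_2$ is unipotent, so the trace is $2\leq\ell(n)$. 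I checked the details (the transfer-matrix identification, the characteristic polynomial of both matrices, and the alternating case) and they hold. Your route is more elementary and self-contained: no rearrangement identity and no Lucas triangle, with the Lucas recursion emerging for structural reasons. The paper's route buys extra information: it identifies the extremal sequences (all symbols of one type) and makes the link to the Golden--Thompson inequality that the author comments on after the proof, while your isolated alternating case makes explicit that heavily mixed sequences give far smaller counts, a fact hidden in the interior of the Lucas triangle row in the paper's argument.
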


\begin{proof}
The proof is by induction on $n.$ For any sequence $(s_1,s_2)$ there is at least one element of $\{0,1\}^2$ that is not allowed, and the cardinality of the set is at most $2^2-1=3=\ell(2).$ 

Now $n>2.$ If $s_i=1$ for some $i,$ then $x_i=x_{i+1}$ (and similarly in the case $i=n$) and we have a projection down to the case $n-1.$ By induction and $\ell(n-1)\leq \ell(n)$ this case is done. Remaining is the case of all $s_i \neq 1.$ If $s_i=0$ then
\[
\left(
\begin{array}{c}
\# \{ x_1,\ldots,x_{i+1} \textrm{ valid, }x_{i+1}=1 \} \\
\#\{x_1,\ldots,x_{i+1} \textrm{ valid, }x_{i+1}=0  \} \\
\end{array}
\right)
=
\left(
\begin{array}{cc}
1 & 1 \\
1 & 0 \\
\end{array}
\right)
\left(
\begin{array}{c}
\# \{ x_1,\ldots,x_{i} \textrm{ valid, }x_{i}=1 \} \\
\# \{ x_1,\ldots,x_{i} \textrm{ valid, }x_{i}=0 \}  \\
\end{array}
\right)
\]
and if $s_i=2$ then
\[
\left(
\begin{array}{c}
\#\{ x_1,\ldots,x_{i+1} \textrm{ valid, }x_{i+1}=1 \} \\
\#\{ x_1,\ldots,x_{i+1} \textrm{ valid, }x_{i+1}=0 \} \\
\end{array}
\right)
=
\left(
\begin{array}{cc}
0 & 1 \\
1 & 1 \\
\end{array}
\right)
\left(
\begin{array}{c}
\#\{ x_1,\ldots,x_{i} \textrm{ valid, }x_{i}=1 \}  \\
\#\{ x_1,\ldots,x_{i} \textrm{ valid, }x_{i}=0 \}  \\
\end{array}
\right).
\]
This shows that the cardinality of the set in question is $\mathrm{Tr}(A_1A_2\cdots A_n)$ where
each $A_i$ is one of the $2\times 2$ matrices determined by the value of $s_i.$ It remains to show that the trace is at most $\ell(n)$ for $n> 2.$

For integers $i,j,k,l \geq 1$ elementary Fibonacci number manipulations shows that
\[
\left(
\begin{array}{cc}
1 & 1 \\
1 & 0 \\
\end{array}
\right)^{i+k}
\left(
\begin{array}{cc}
0 & 1 \\
1 & 1 \\
\end{array}
\right)^{j+l}
-
\left(
\begin{array}{cc}
1 & 1 \\
1 & 0 \\
\end{array}
\right)^{i}
\left(
\begin{array}{cc}
0 & 1 \\
1 & 1 \\
\end{array}
\right)^{j}
\left(
\begin{array}{cc}
1 & 1 \\
1 & 0 \\
\end{array}
\right)^{k}
\left(
\begin{array}{cc}
0 & 1 \\
1 & 1 \\
\end{array}
\right)^{l}
\]
equals
\[
\varphi(j)\varphi(k)
\left(
\begin{array}{cc}
1 & 1 \\
1 & 0 \\
\end{array}
\right)^{i-1}
\left(
\begin{array}{cc}
\varphi(l-2) & \varphi(l-1) \\
\varphi(l) & \varphi(l+1) \\
\end{array}
\right),
\]
which is non-negative in all entries. By repeated application of this to bundle up the matrices by type, we have 
that 
\[ 
\mathrm{Tr}(A_1A_2\cdots A_n) \leq \mathrm{Tr}
\left(
\left(
\begin{array}{cc}
1 & 1 \\
1 & 0 \\
\end{array}
\right)^m
\left(
\begin{array}{cc}
1 & 1 \\
1 & 0 \\
\end{array}
\right)^{n-m}
\right)
\]
where $m$ is the number of zeros in the given sequence $s$ and $n-m$ is the number of twos in it.
As $m$ goes from $0$ to $n$ in the right hand side of the previous inequality
one gets a row of the \emph{Lucas triangle} \cite{josef}, whose top part looks like this:
\[
\begin{array}{ccccccccccc}
&&&&& 2\\
&&&& 1 && 1\\
&&& 3 && 2 && 3 \\
&& 4 && 3 && 3 && 4 \\
& 7 && 5 && 6 && 5 && 7 \\
11 && 8 && 9 && 9 && 8 && 11 \\
\end{array}
\]
Each row of the Lucas triangle is maximised on the boundary, where it is the Lucas number $\ell(n).$
\end{proof}

In the proof of Lemma \ref{lemma:postLucas} two $2 \times 2$ matrices are related by a non-standard transposition. Except for that, the trace inequality is very similar to the first part of Dyson's proof of the Golden-Thompson inequality \cite{dyson}. Small computer calculations supports that there is nothing particular with the matrices in the proof, except that they are related by the non-standard transposition.

Now we turn to applying discrete Morse theory to independence complexes of graphs.
The \emph{girth} of a graph is the length of its shortest cycle. In graphs of minimum degree three and a bounded number of vertex disjoint cycles the girth is also bounded. We modify a constructive graph theoretic approach by Voss \cite{V69} to estimate the girth. For our approach this is more suitable than the commonly cited work by Simonovits \cite{S}.

\begin{definition}
The \emph{effective girth} of a graph $G$ is the smallest number of vertices of degree at least three on a common cycle. For integers $k \geq 1$ let $g(k)$ be the maximal effective girth of a graph of minimal degree at least two with at most $k$ vertex disjoint cycles.
\end{definition}

\begin{proposition}\label{prop:Voss}
 $g(1)=4, g(2)=6, g(3)=8; g(k)\leq (2+o(1))\log_2(k)$ and $g(k)\leq 2 + 2 \log_2 \left( 1 + \sum_{i=1}^k g(i) \right)$
 for $k>1.$
\end{proposition}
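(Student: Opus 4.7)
The plan is to reduce to graphs of minimum degree $\geq 3$ by suppressing all degree-two vertices: if $H$ is obtained from $G$ by iteratively replacing each degree-two vertex with a direct edge between its two neighbours, then $H$ has minimum degree $\geq 3$, the girth of $H$ equals the effective girth of $G$, and disjoint cycles in $H$ pull back to disjoint cycles in $G$, so $\nu(H)\le \nu(G)\le k$. The base cases $g(1)=4$, $g(2)=6$, $g(3)=8$ I would verify by a short case analysis of min-degree-$3$ graphs with small $\nu$; the lower bounds come from explicit extremal constructions such as subdivisions of $K_{3,3}$ and similar ``theta-like'' graphs with effective girths $4$, $6$, $8$, while the matching upper bounds require arguing that any min-degree-$3$ graph with so few disjoint cycles must contain a cycle of the claimed length.

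For the recursive inequality with $k>1$, the strategy is Voss's iterative extraction of short disjoint cycles. Set $H_k=H$, let $C_k$ be a shortest cycle of $H_k$, and form $H_{k-1}$ by deleting $V(C_k)$ and iteratively cleaning up vertices of degree $\le 2$. A packing argument shows $\nu(H_{k-1})\le k-1$: otherwise a $k$-packing of $H_{k-1}$ would lift to $k$ cycles of $H_k-V(C_k)$ vertex-disjoint from $C_k$, contradicting $\nu(H_k)\le k$. Iterating, one obtains vertex-disjoint cycles $C_k,C_{k-1},\ldots,C_1$ in $H$ with $|V(C_i)|\le g(i)$ by induction on $k$.

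With the $k$ cycles in hand, I would perform a BFS in $H$ seeded at a single base vertex $v_0$. Because $H$ has minimum degree $\geq 3$ and girth $g(k)$, no cycle closes in the BFS tree until depth $\lfloor (g(k)-2)/2\rfloor$, by which point the tree contains at least $2^{(g(k)-2)/2}$ vertices. The extracted cycles together with $v_0$ must account for all of these BFS vertices via the Voss bookkeeping that identifies every vertex of $H$ as belonging either to some $C_i$, to a degree-two bridge suppressed inside a later residual, or to the BFS frontier. This yields $1+\sum_{i=1}^k g(i)\ge 2^{(g(k)-2)/2}$, equivalently $g(k)\le 2+2\log_2(1+\sum_{i=1}^k g(i))$. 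The asymptotic bound then follows by induction: if $g(i)\le C\log_2 i$ for $i<k$, then $\sum_{i<k} g(i)=O(k\log k)$, so $2\log_2(1+\sum g(i))=2\log_2 k+O(\log\log k)=(2+o(1))\log_2 k$, and the self-referential $g(k)$ term on the right-hand side is absorbed.

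The main obstacle is the combinatorial bookkeeping in the greedy extraction: after each deletion-and-suppression step, some original vertices of $H$ become internal degree-two bridges or drop out of the residual entirely. Getting the Moore inequality with the stated right-hand side requires tracking every vertex of $H$ as belonging to exactly one of the extracted cycles, a suppressed bridge, or the BFS tree, without double-counting and without over-estimating the growth of the BFS tree due to possible edges between extracted cycles and later layers. I expect this to be the technical core of the proof, and the place where the combinatorial parameter $k$ enters the bound in a nontrivial way.
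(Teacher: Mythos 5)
The paper does not actually prove this proposition: the entire proof is the one-line citation ``This was proved by Voss \cite{V69} where it is stated for the girth of graphs of minimal degree three.'' The only content supplied by the paper is the (implicit) observation that the effective girth of a min-degree-$\ge 2$ graph equals the ordinary girth of the min-degree-$\ge 3$ graph obtained by suppressing degree-two vertices, after which Voss's theorem is invoked verbatim. Your opening reduction to min-degree-$3$ graphs is exactly this translation step and is correct; but everything after it is an attempt to reprove Voss's 1969 theorem from scratch, which is far beyond what the paper itself does or needs to do.

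Within your sketch there is a genuine gap at the Moore-bound step, and you flag it yourself. You want $1+\sum_{i=1}^k g(i)\ge 2^{(g(k)-2)/2}$ by arguing that the BFS ball of radius $\lfloor(g(k)-2)/2\rfloor$ around $v_0$ in $H$ ``must account for'' only the extracted cycles plus $v_0$. No reason is given for this containment: $H$ can have arbitrarily many vertices lying on none of the $C_i$, so the exponential lower bound on $|B(v_0,d)|$ that you get from min-degree-$3$ and girth bounds $|V(H)|$, not $1+\sum g(i)$. Compounding this, each $C_i$ is a shortest cycle of the \emph{suppressed} graph $H_i$, so $|V(C_i)|\le g(i)$ counts only the degree-$\ge 3$ vertices of $H_i$; the preimage of $C_i$ in the original $H$ passes through arbitrarily long suppressed paths and can be much larger than $g(i)$. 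So the quantity $\sum g(i)$ does not directly control the number of original vertices covered by the extracted cycles. The ``Voss bookkeeping'' you defer to is precisely the nontrivial part of Voss's proof (roughly, one must run the Moore-type growth argument inside a carefully chosen subgraph spanned by the extracted cycles and the connecting bridges rather than in all of $H$), and without it the key inequality is unsupported. Since the paper simply cites \cite{V69} for this, the cleanest fix is to do the same; otherwise you would need to supply the missing structural lemma that confines the relevant BFS growth to the $O(\sum g(i))$-vertex substructure.
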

\begin{proof}
This was proved by Voss \cite{V69} where it is stated for the girth of graphs of minimal degree three.
\end{proof}

\begin{theorem}\label{thm:removeCycle}
Let $C$ be an induced cycle in a graph $G$ with at most $n \geq 2$ vertices of $C$ adjacent to vertices of $G\setminus C$ and assume that ${\mathbf{c}}(H)\leq c$ for every induced subgraph $H$ of $G\setminus C.$ Then 
${\mathbf{c}}(G)\leq c\ell(n).$
\end{theorem}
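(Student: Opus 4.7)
The plan is to build a large acyclic matching on $\textrm{Ind}(G)$ by stratifying over the trace of an independent set on the ``connector'' vertices of $C$ (the vertices of $C$ with a neighbour in $G \setminus C$), applying Lemma~\ref{lem:disjoint} inside each stratum, and then counting the strata that actually contribute by means of Lemma~\ref{lemma:postLucas}. Write $u_1, \ldots, u_n$ for the connectors in cyclic order on $C,$ and let $p_i \geq 0$ be the number of non-connector vertices of $C$ strictly between $u_i$ and $u_{i+1}$ (indices read cyclically). I would consider the poset map
\[ \psi: \textrm{Ind}(G) \to \textrm{Ind}(C[\{u_1, \ldots, u_n\}]), \qquad I \mapsto I \cap \{u_1, \ldots, u_n\}, \]
and apply Lemma~\ref{lem:fiber}, so that it suffices to build, for each $y$ in the image, a matching on $\psi^{-1}(y)$ whose critical cells sum to at most $c \cdot \ell(n)$ across all $y.$

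For each valid $y$ the fiber $\psi^{-1}(y)$ is isomorphic as a poset to $\textrm{Ind}(G_y),$ where $G_y$ is the induced subgraph of $G$ on $V_G \setminus \{u_1, \ldots, u_n\} \setminus N_G(y).$ Since every non-connector of $C$ has both of its neighbours inside $C,$ the graph $G_y$ splits as a disjoint union $G_y = P_y \sqcup H_y,$ where $P_y$ is a forest consisting of one path from each arc of $C$ and $H_y$ is an induced subgraph of $G \setminus C.$ Lemma~\ref{lem:disjoint} together with Proposition~\ref{lem:forest} and the hypothesis ${\mathbf{c}}(H_y) \leq c$ then yield
\[ {\mathbf{c}}(G_y) \leq {\mathbf{c}}(P_y) \cdot {\mathbf{c}}(H_y) \leq c \cdot {\mathbf{c}}(P_y) \leq c. \]

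Next I would pin down when ${\mathbf{c}}(P_y)$ can be nonzero. A short induction using Lemma~\ref{lem:link} and Lemma~\ref{lem:isoV} shows that the path $P_\ell$ on $\ell \geq 1$ vertices satisfies ${\mathbf{c}}(P_\ell) = 0$ whenever $\ell \equiv 1 \pmod 3,$ and then Lemma~\ref{lem:disjoint} implies that a single such component of $P_y$ already forces ${\mathbf{c}}(P_y) = 0.$ A case check on the four values of $(y_i, y_{i+1}) \in \{0,1\}^2$ shows that the $i$-th arc produces a component with $\ell \geq 1$ and $\ell \equiv 1 \pmod 3$ exactly when $y_i + y_{i+1} \equiv p_i - 1 \pmod 3;$ the boundary cases $p_i \in \{0,1\}$ must be inspected separately because the naive formula $\ell_i = p_i - y_i - y_{i+1}$ is no longer literally correct. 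Setting $s_i := (p_i - 1) \bmod 3 \in \{0,1,2\},$ any $y$ with ${\mathbf{c}}(G_y) > 0$ must satisfy $y_i + y_{i+1} \neq s_i$ for every $i,$ and for $p_i = 0$ this inequality coincides with the condition that $y$ is independent on $C.$

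By Lemma~\ref{lemma:postLucas} the number of such $y \in \{0,1\}^n$ is at most $\ell(n),$ so assembling the fiberwise matchings through Lemma~\ref{lem:fiber} gives the desired bound ${\mathbf{c}}(G) \leq c \cdot \ell(n).$ The main obstacle I foresee is verifying the uniform formula $s_i = (p_i - 1) \bmod 3$ in every boundary situation (a single non-connector adjacent to both $u_i$ and $u_{i+1},$ or no non-connector at all), but once that is done the rest is clean fiber accounting.
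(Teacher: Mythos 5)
Your proposal is correct and takes essentially the same route as the paper's proof: a poset map from $\textrm{Ind}(G)$ to the independent sets on the distinguished cycle vertices, fiberwise splitting into arc-paths plus an induced subgraph of $G\setminus C$, the mod-3 vanishing of ${\mathbf{c}}$ for paths, and Lemma~\ref{lemma:postLucas} to bound the number of contributing fibers by $\ell(n)$, glued by Lemma~\ref{lem:fiber}. The boundary cases you flag only ever produce harmless overcounting (the paper sidesteps them the same way by relaxing the count to all of $\{0,1\}^n$), and the case of fewer than $n$ actual connectors is handled by padding to an $n$-element set or by monotonicity of $\ell$.
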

\begin{proof}
Let $N$ be the set of $n$ vertices on $C$ that are allowed to be adjacent to vertices of $G\setminus C.$
Enumerate them $v_1,v_2,\ldots,v_n$ clockwise around $C$ and let $t_i$ be then number of vertices between
$v_i$ and $v_{i+1}$ going clockwise from $v_i$ to $v_{i+1}.$ Also let $t_n$ be the number of vertices going clockwise from $v_n$ to $v_1.$ Consider the poset map $\phi:  \textrm{Ind}(G) \rightarrow  \textrm{Ind}(G[N]).$
We want to construct an acylic matching on each fiber and then glue them together. 
The elements $\sigma \in \textrm{Ind}(G[N])$ can be encoded as sequences $(y_1,\ldots,y_n)\in \{0,1\}^n$ by $y_i=1$ if and only if $v_i\in\sigma.$ Then the fiber of $\sigma$ is isomorphic to the independence complex of a disjoint union of
\begin{itemize}
\item[(1)] a path on $t_1-y_1-y_2$ vertices,
\item[(2)] a path on $t_2-y_2-y_3$ vertices,
\item[] $\vdots$
\item[(n-1)] a path on $t_{n-1}-y_{n-1}-y_n$ vertices,
\item[(n)] a path on $t_n-y_n-y_1$ vertices, and
\item[($\ast$)] the graph $H=(G\setminus C) \setminus \cup_{v_i \in \sigma} N(v_i).$
\end{itemize}
By repeated use of Lemmas~\ref{lem:isoV}, \ref{lem:isoE}, \ref{lem:fold} one can see that $\mathbf{c}(P_n)=0$
if $n\equiv 1 (3)$ and $\mathbf{c}(P_n)=1$ otherwise, where $P_n$ is a path on $n$ vertices. By assumption
$\mathbf{c}(H)\leq c.$ By Lemma~\ref{lem:disjoint} we have an acyclic matching on the fiber with at most $c$ critical cells if all of the path lengths are not equal to one modulo three. If a single one of them is equal to one modulo three, then we have an acyclic matching without critical cells. Relaxing this problem to sum up over all
$(y_1,\ldots,y_n)\in \{0,1\}^n,$ and removing the modulo three symmetries, we get the set stated in 
Lemma~\ref{lemma:postLucas} and the factor $\ell(n)$ counting fibers with at most $c$ critical cells. By Lemma~\ref{lem:fiber} there is an acyclic matching on $\textrm{Ind}(G)$ with at most $c\ell(n)$ critical cells.
\end{proof}

\begin{theorem}\label{theorem:main}
If $G$ is a graph with at most $k\geq 0$ vertex disjoint cycles, then
\[
{\mathbf{c}}(G)\leq \ell(g(1))\ell(g(2))\cdots \ell(g(k)).
\]
\end{theorem}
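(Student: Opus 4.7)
The plan is induction on $k$. The base case $k=0$ is Proposition~\ref{lem:forest}: a graph with no cycles is a forest and satisfies ${\mathbf{c}}(G) \leq 1$, which matches the empty product on the right hand side.

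For the inductive step, assume the bound holds for $k-1$ and let $G$ have at most $k \geq 1$ vertex disjoint cycles. First I would reduce to the case of minimum degree at least two. If $G$ has an isolated vertex, Lemma~\ref{lem:isoV} gives ${\mathbf{c}}(G)=0$; if $G$ has an isolated edge, Lemma~\ref{lem:isoE} passes to a graph on two fewer vertices; and if $G$ has a leaf $v$ whose unique neighbour $w$ has a second neighbour $u$, then $N(v) \subseteq N(u)$ and Lemma~\ref{lem:fold} gives ${\mathbf{c}}(G) \leq {\mathbf{c}}(G \setminus u)$. Each such reduction yields a graph still having at most $k$ vertex disjoint cycles, since vertex deletion and folding cannot create new cycles, so by an inner induction on $|V(G)|$ I may assume $G$ itself has minimum degree at least two.

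Now by the definition of $g(k)$, the graph $G$ contains a cycle with at most $g(k)$ vertices of degree at least three; inside its vertex set I can pick an induced cycle $C'$. Since $V(C')$ lies in the vertex set of the original cycle, $C'$ still has at most $g(k)$ vertices of degree at least three in $G$, and because $C'$ is induced, these are precisely the vertices of $C'$ with a neighbour in $G \setminus C'$. Any induced subgraph $H$ of $G \setminus C'$ has at most $k-1$ vertex disjoint cycles, since otherwise $C'$ together with a packing in $H$ would yield $k+1$ vertex disjoint cycles in $G$. By the induction hypothesis, ${\mathbf{c}}(H) \leq \ell(g(1)) \cdots \ell(g(k-1))$ for every such $H$.

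Since $g(k) \geq g(1) = 4 \geq 2$, Theorem~\ref{thm:removeCycle} applied to $C'$ with $n = g(k)$ then yields
\[
{\mathbf{c}}(G) \leq \ell(g(1)) \cdots \ell(g(k-1)) \cdot \ell(g(k)),
\]
closing the induction. The main obstacle is controlling the cycle packing number across the two reductions (folding down to minimum degree two, and replacing the effective-girth cycle by an induced subcycle), but both operations only delete vertices or edges and therefore only decrease the cycle packing number, so this goes through cleanly.
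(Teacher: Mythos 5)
Your proposal is correct and follows essentially the same route as the paper: induction on $k$, reduction to minimum degree at least two by folds and the isolated vertex/edge lemmas, then extraction of an induced cycle with at most $g(k)$ attachment vertices and an application of Theorem~\ref{thm:removeCycle} with the inductive bound on induced subgraphs of the complement. The only (harmless) deviations are that you handle isolated vertices and edges one at a time inside an inner induction instead of splitting off the minimum-degree-two part via Lemma~\ref{lem:disjoint}, and that you spell out the passage from the effective-girth cycle to an induced cycle, which the paper leaves implicit.
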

\begin{proof}
The proof is by induction on $k.$ If $k=0$ then $G$ is a forest and ${\mathbf{c}}(G)\leq 1.$

Now $k>0.$ By repeated use of Lemma~\ref{lem:fold} vertices can be folded away from $G$ to reach a subgraph $G'$ such that ${\mathbf{c}}(G)\leq {\mathbf{c}}(G')$ and each vertex of degree one in $G'$ is on an isolated edge.
If $G'$ is a disjoint union of isolated vertices and edges, then $ {\mathbf{c}}(G') \leq 1$ by Lemmas~\ref{lem:isoV} and \ref{lem:isoE}. Otherwise $G'$ is a disjoint union of isolated vertices, isolated edges, and a subgraph $G''$ of minimal degree at least two, and ${\mathbf{c}}(G')\leq {\mathbf{c}}(G'')$ by Lemmas~\ref{lem:isoV}, \ref{lem:isoE} and \ref{lem:disjoint}. 

In $G''$ there is an induced cycle $C$ with at most $g(k)$ vertices of $C$ adjacent to vertices of $G'' \setminus C$ according to Proposition~\ref{prop:Voss}, and 
\[ {\mathbf{c}}(G)\leq {\mathbf{c}}(G') \leq {\mathbf{c}}(G'') \leq c \ell(g(k)) \]
by Theorem~\ref{thm:removeCycle}, where ${\mathbf{c}}(H)\leq c$ should be satisfied for all induced subgraphs $H$ of $G''.$ Any such $H$ has at most $k-1$ vertex disjoint cycles and by induction $c \leq \ell(g(1))\ell(g(2))\cdots \ell(g(k-1)).$
\end{proof}

\begin{corollary}\label{cor:main}
If $G$ is a graph with at most $k \geq 2$ vertex disjoint cycles, then
\[ 
{\mathbf{b}}(G)\leq {\mathbf{c}}(G) \leq \left( \frac{1+\sqrt{5}}{2} \right) ^ {(2+o(1)) k \log_2 k}.
\]
\end{corollary}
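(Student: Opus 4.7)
The plan is to derive the corollary as a quantitative consequence of Theorem~\ref{theorem:main}, Proposition~\ref{prop:Voss}, and Corollary~\ref{cor:dmt}. The first inequality ${\mathbf{b}}(G) \leq {\mathbf{c}}(G)$ is immediate: an acyclic matching on the face poset of $\textrm{Ind}(G)$ with ${\mathbf{c}}(G)$ critical cells exists by definition, so Corollary~\ref{cor:dmt} applies. The content of the corollary is therefore entirely in turning the product $\ell(g(1))\ell(g(2))\cdots \ell(g(k))$ from Theorem~\ref{theorem:main} into the stated exponential in $k \log_2 k$.

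First I would use the Binet formula $\ell(n) = \varphi^n + \psi^n$ with $\varphi = (1+\sqrt 5)/2$ and $\psi = -1/\varphi,$ which gives $\ell(n) \leq \varphi^n + 1 \leq \varphi^{n}(1 + \varphi^{-n})$ for $n \geq 1.$ Taking the product over $i = 1, \ldots, k$ yields
\[
\prod_{i=1}^k \ell(g(i)) \leq \varphi^{\,\sum_{i=1}^k g(i)} \cdot \prod_{i=1}^k \bigl(1 + \varphi^{-g(i)}\bigr).
\]
Since $g(i) \geq g(1) = 4$ for all $i \geq 1,$ the product on the right is bounded by $(1 + \varphi^{-4})^k = e^{O(k)},$ which contributes only an additive $O(k)$ to the exponent of $\varphi$ and hence is absorbed into the $o(1)$ factor once we divide by $k \log_2 k.$

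Next I would estimate $\sum_{i=1}^k g(i).$ Proposition~\ref{prop:Voss} gives $g(i) \leq (2 + o(1)) \log_2 i$ as $i \to \infty,$ so for any $\varepsilon > 0$ there is a threshold $i_0$ such that $g(i) \leq (2+\varepsilon)\log_2 i$ for $i \geq i_0,$ while $g(1), \ldots, g(i_0-1)$ sum to a constant. Hence
\[
\sum_{i=1}^k g(i) \leq O(1) + (2+\varepsilon) \sum_{i=1}^k \log_2 i = (2+\varepsilon) \log_2(k!) + O(1) = (2+\varepsilon) k \log_2 k + O(k),
\]
using Stirling. Since $\varepsilon$ can be taken arbitrarily small for $k$ large, this bound is $(2+o(1)) k \log_2 k.$

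Combining these two estimates, Theorem~\ref{theorem:main} gives
\[
{\mathbf{c}}(G) \leq \prod_{i=1}^k \ell(g(i)) \leq \varphi^{(2+o(1)) k \log_2 k},
\]
and together with ${\mathbf{b}}(G) \leq {\mathbf{c}}(G)$ this is exactly the statement. There is no real obstacle here; the only thing to watch is that the two error sources—the additive $+\psi^{g(i)}$ in each Lucas term and the $o(1)$ in $g(i)$—both contribute at most $O(k)$ to the logarithm of the bound, which is dominated by $k \log_2 k$ and hence safely absorbed into $(2+o(1))k\log_2 k.$
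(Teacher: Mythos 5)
Your proposal is correct and matches the intended derivation: the paper states Corollary~\ref{cor:main} without a separate proof precisely because it is this routine combination of Theorem~\ref{theorem:main}, the asymptotic $g(i) \leq (2+o(1))\log_2 i$ from Proposition~\ref{prop:Voss}, and Stirling's estimate for $\log_2(k!) = k\log_2 k + O(k)$. Your accounting of the two $O(k)$ error sources---the additive term in $\ell(n) = \varphi^n + \psi^n$ and the Stirling correction---is exactly the right way to see that they vanish into the $o(1)$ in the exponent.
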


The graph consisting of $k$ disjoint copies of $K_5$ shows that this bound cannot be improved beyond $4^k.$ We believe that the correct exponent should be $k \log k$ rather than $k.$ Essentially this should be able to achieve by combining the probabilistic lower bound of the Erd\"os-P\'osa theorem \cite{EP65}  with Euler-characteristic calculations, since the homology should be concentrated in one dimension for a random independent complex. The resulting calculations should be doable with methods from statistical physics. Unfortunately we haven't been able to rigourously perform those calculations at this point. 

The upper bound in Erd\"os-P\'osa theorem gives an upper bound on the number of vertices that has to be removed from a graph to turn it acyclic, given that it has at most $k$ vertex disjoint cycles. The main theorem of \cite{E09} by  Engstr\"om provides an upper bound on the dimension of the total cohomology of the independence complex of a graph, given the number of vertices required to be removed to turn it acyclic. Combining those two theorems gives a bound like in Corollary~\ref{cor:main} but much worse.

For a graph $G$ without cycles we know that ${\mathbf{b}}(G)\leq 1$ is optimal. The next case is easy.
\begin{proposition}
Let $G$ be a graph without two vertex disjoint cycles. Then ${\mathbf{b}}(G)\leq 4,$ and it is optimal.
\end{proposition}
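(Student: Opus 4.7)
Optimality is witnessed by $K_5$: two vertex-disjoint cycles need at least six vertices, so $\nu(K_5)=1$, and $\textrm{Ind}(K_5)$ is the discrete complex on five points, giving $\mathbf{b}(K_5)=4$.

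For the upper bound, the plan is to prove $\mathbf{c}(G)\leq 4$ and invoke Corollary~\ref{cor:dmt}. First I would apply the reductions of Lemmas~\ref{lem:isoV}, \ref{lem:isoE}, \ref{lem:fold}, and~\ref{lem:disjoint} to assume that $G$ is connected with minimum degree at least two, and then split into two cases. If $G$ contains a triangle $T$, then $T$ is an induced cycle and $G\setminus V(T)$ is a forest (any cycle there would be vertex-disjoint from $T$, contradicting the hypothesis). Proposition~\ref{lem:forest} then gives $\mathbf{c}\leq 1$ on every induced subgraph of $G\setminus V(T)$, and Theorem~\ref{thm:removeCycle} applied with $C=T$ and $n\leq 3$ yields $\mathbf{c}(G)\leq\ell(3)=4$.

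If $G$ is triangle-free I would take $C$ to be a shortest induced cycle (of length at least four) and let $N$ be its set of branching vertices. Proposition~\ref{prop:Voss} bounds $|N|\leq g(1)=4$, and if $|N|\leq 3$ then Theorem~\ref{thm:removeCycle} again gives $\mathbf{c}(G)\leq\ell(3)=4$. The remaining case is $|N|=4$, where the crude bound from Theorem~\ref{thm:removeCycle} is only $\ell(4)=7$. The hard part is tightening this to $4$ by exploiting that triangle-freeness forces adjacent branching vertices to have disjoint external neighbourhoods, together with the forest structure of $G\setminus V(C)$; these constraints should let me pair additional critical cells across the seven fibers of the poset map $\phi:\textrm{Ind}(G)\to\textrm{Ind}(G[N])$, analogously to the elementary matching yielding $\mathbf{c}(K_{3,3})=1$, leaving at most four critical cells.
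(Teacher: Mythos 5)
Your optimality argument ($K_5$) and your triangle case are correct and essentially match the paper's. The reduction to minimum degree at least two is also the right first step. But there is a genuine gap exactly where you flag one: the triangle-free case with four branching vertices on the cycle. You acknowledge that Theorem~\ref{thm:removeCycle} only yields $\ell(4)=7$ there and say that triangle-freeness and the forest structure of $G\setminus V(C)$ ``should let me pair additional critical cells,'' but that is a hope, not an argument; nothing in the paper's machinery gives a finer-than-$\ell(n)$ count of critical cells in the fibers of $\phi:\textrm{Ind}(G)\to\textrm{Ind}(G[N])$, and the analogy to $K_{3,3}$ is not developed into a matching. This case cannot be waved away either: the contracted graph $G^\ast$ (replace each degree-two vertex and its two edges by a single edge) being $K_{3,t}$ with $t\geq 3$ is exactly the situation in which every cycle has at least four branching vertices, so a graph like $K_{3,3}$ genuinely lands in your unfinished case.

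The paper avoids the problem by not insisting on Theorem~\ref{thm:removeCycle} in the hard case. It first asks whether $G^\ast$ has a triangle (a weaker condition than $G$ having one); if so, a cycle with at most three branching vertices exists and $\ell(3)=4$ suffices. If $G^\ast$ is triangle-free, it is shown (a short structural exercise, a version of Lov\'asz/Dirac's characterisation of graphs without two disjoint cycles) that $G^\ast\cong K_{3,t}$, and then the key move is to switch tools: pick two vertices $u,v$ of $G$ surviving to the side of size three in $G^\ast$; deleting them leaves a forest, and Proposition~\ref{prop:feedback} gives $\mathbf{c}(G)\leq 2^2=4$. So the paper replaces the delicate fiberwise matching you were hoping for with a feedback-vertex-set bound, which is much easier to verify. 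If you want to salvage your outline, this is the step to import: identify the $K_{3,t}$ structure in the triangle-free case and use Proposition~\ref{prop:feedback} on two vertices of the small side rather than trying to sharpen Theorem~\ref{thm:removeCycle}.
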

\begin{proof}
As in the proof of Theorem~\ref{theorem:main} only graphs of minimal degree at least two needs to be considered. We can also assume that $G$ is connected and not acyclic. Let $G^\ast$ be the graph attained from $G$ by the iterative contractions that replaces each vertex of degree two and its two incident edges by one edge. The minimal degree of $G^\ast$ is at least three.

If there is a triangle in $G^\ast,$ then ${\mathbf{c}}(G)\leq 1\cdot \ell(3)=4$ by Theorem~\ref{thm:removeCycle}.
If there isn't a triangle in $G^\ast,$ then by a straight forward exercise one can see that $G^\ast$ is isomorphic to a complete bipartite graph $K_{3,t}$ with $t\geq 3.$ Let $u$ and $v$ be two different vertices of $G$ that after the contractions to $G^\ast$ are still left, and they are in the part with 3 vertices. Removing $u$ and $v$ from $G$ gives an acyclic graph, and by Proposition~\ref{prop:feedback} applied to them, ${\mathbf{c}}(G)\leq 2^2=4.$

The graph $K_5$ shows that it's optimal.
\end{proof}

\section{Ramanujan graphs}

Good conditions to show strong lower bounds for the chromatic number is very hard to attain in general and several attacks have been made from topological combinatorics. A 
very interesting approach have been lifted by Kalai and Meshulam: Can a lower bound for the chromatic number of a graph be derived from that one of its subgraphs have a high total Betti number? In this section we show that there are Ramanujan graphs with arbitrary high chromatic numbers whose all small subgraphs have low total Betti number. This could be interpreted as that large enough subgraphs need to be inspected using Betti numbers to determine the chromatic number. It is quite obvious that one needs to pass a barrier of $\log N$ vertices to understand the chromatic number of order $N$ graphs, since the girth can be pushed to that order for arbitrary chromatic number. We show that at least $N^\alpha$ vertices for some $\alpha>0$ is necessary to find subgraphs with high enough Betti numbers.

The following optimal expanders were constructed by Lubotzky, Phillips and Sarnak \cite{LPS}. See Nesetril \cite{Ne} for an accessible survey on their properties as listed here.

\begin{theorem}\label{theorem:ramanujan}
For primes $p$ and $q$ with Legendre symbol $(\frac{p}{q})=1$ and $q$ sufficiently large, there are \emph{Ramanujan graphs} $X^{p,q}$ with $q(q^2-1)/2$ vertices, girth at least $2 \log_p q$ and chromatic number at least $\frac{p+1}{2\sqrt{p}}.$
\end{theorem}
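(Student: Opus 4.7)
The plan is to simply invoke the Lubotzky--Phillips--Sarnak construction \cite{LPS}; since the theorem is really a packaging of their results with properties extracted from Nesetril's survey \cite{Ne}, the ``proof'' reduces to defining the graphs and checking each of the three listed quantities separately.

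First I would define $X^{p,q}$ as a Cayley graph of $PSL_2(\mathbb{F}_q)$ on a symmetric generating set of size $p+1$ coming from the integer quaternions of norm $p$, reduced modulo $q$. The condition $\bigl(\tfrac{p}{q}\bigr)=1$ is what guarantees the generators land in $PSL_2$ (rather than $PGL_2$), so the graph is $(p+1)$-regular and bipartite-free in the relevant case. The vertex count $q(q^2-1)/2$ is then just the order of $PSL_2(\mathbb{F}_q)$, which is standard.

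Second I would establish the girth bound. A closed walk of length $2k$ in the Cayley graph corresponds to a nontrivial reduced word of length $2k$ in the generating quaternions that evaluates to the identity modulo $q$. Such a product has quaternion norm $p^{2k}$, and a nontrivial integer quaternion that is trivial modulo $q$ must have norm at least $q^2$ (roughly); forcing $p^{2k}\geq q^2$ gives the girth lower bound $2\log_p q$. This is the content of the LPS girth computation and I would quote it rather than reprove the quaternion arithmetic, which is the main technical obstacle if one tried to give a self-contained argument.

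Third I would use the Ramanujan property. The graph is $(p+1)$-regular and every nontrivial eigenvalue of its adjacency matrix has absolute value at most $2\sqrt{p}$. The Hoffman bound $\chi(G) \geq 1 + \lambda_{\max}/|\lambda_{\min}|$ (or its ratio form $\chi(G) \geq \lambda_{\max}/|\lambda_{\min}|$ when $G$ is regular and the minimum eigenvalue is negative) then yields
\[
\chi(X^{p,q}) \;\geq\; \frac{p+1}{2\sqrt{p}}.
\]
Combining these three verifications gives the stated theorem; the only step that is not routine bookkeeping is the girth estimate, which is the one place where I would lean entirely on \cite{LPS}.
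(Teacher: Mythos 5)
Your proposal is correct and matches the paper's treatment: the paper states this theorem with no proof of its own, simply citing Lubotzky--Phillips--Sarnak \cite{LPS} (with Ne\v{s}et\v{r}il \cite{Ne} as a survey), which is exactly what you do. Your added sketches --- the $PSL_2(\mathbb{F}_q)$ Cayley-graph description under $(\frac{p}{q})=1$, the quaternion-norm girth argument, and the Hoffman-type eigenvalue bound giving $\chi \geq \frac{p+1}{2\sqrt{p}}$ --- are the standard ingredients behind the cited result and are consistent with it.
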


\begin{proposition}
For every $n$ there is a planar graph $P$ on $n$ vertices such that for every $m\leq n$ there is a subgraph $H$ of $P$ on $m$ vertices with ${\mathbf{b}}(P)\geq 2^{(m-40\sqrt{m})/36}.$
\end{proposition}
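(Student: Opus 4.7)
I would build $P$ as a disjoint union of many planar ``blocks''---copies of $K_3$ together with a few copies of $K_2$---whose Betti numbers multiply to give the required lower bound, and for each $m$ obtain $H$ by discarding a carefully chosen set of $n-m$ vertices from $P$. The key fact is that the total Betti number is multiplicative under disjoint union of graphs: since $\mathrm{Ind}(G\sqcup G')=\mathrm{Ind}(G)*\mathrm{Ind}(G')$ is a join, the reduced Künneth isomorphism $\tilde H_{i+1}(X*Y)\cong\bigoplus_{j+k=i}\tilde H_j(X)\otimes\tilde H_k(Y)$ for joins gives $\mathbf b(G\sqcup G')=\mathbf b(G)\,\mathbf b(G')$. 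Because $\mathbf b(K_3)=2$ and $\mathbf b(K_2)=1$ and both graphs are obviously planar, a disjoint union of $j$ triangles together with any number of edges is planar with $\mathbf b=2^j$.

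\textbf{Construction of $P$ and $H$.} For a given $n$, let $P$ be the disjoint union of $a=\lfloor n/3\rfloor-c$ copies of $K_3$ together with enough copies of $K_2$ to reach exactly $n$ vertices, where the small constant $c$ is chosen according to $n\bmod 6$ so that $P$ contains at least two spare $K_2$'s. Given $m\leq n$, obtain $H$ by deleting $n-m$ vertices from $P$ according to this policy: remove whole triangles whenever possible (each deletion removes three vertices), and handle the residue $(n-m)\bmod 3$ either by removing one or two entire $K_2$'s, or by stripping one vertex each from one or two different triangles (so that every affected triangle shrinks to a $K_2$, never to a single isolated vertex). The crucial invariant is that $H$ contains no isolated vertex, because by Lemma~\ref{lem:isoV} a single isolated vertex in $H$ would force $\mathbf b(H)\leq\mathbf c(H)=0$.

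\textbf{Estimate and main obstacle.} By the multiplicativity above, $H$ is a disjoint union of at least $\lfloor m/3\rfloor-O(1)$ triangles and some additional edges, so $\mathbf b(H)\geq 2^{\lfloor m/3\rfloor-O(1)}$. Comparing to the target $2^{(m-40\sqrt m)/36}$ is then trivial: the target exponent is already non-positive for $m$ below an absolute constant, while for larger $m$ one has $\lfloor m/3\rfloor-O(1)\gg(m-40\sqrt m)/36$. The generous $40\sqrt m$ correction and the denominator $36$ appear to come from a more geometric two-dimensional planar tiling that the author may have in mind, but the loose disjoint-union construction above already beats the stated bound by a wide margin. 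The only genuine difficulty is the residue-class bookkeeping, i.e.\ the finite case analysis on $(n-m)\bmod 3$ and $n\bmod 6$ which guarantees that enough spare $K_2$'s are available in $P$ and that the stripping step never accidentally isolates a vertex.
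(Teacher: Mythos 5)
Your construction is a genuinely different and more elementary route than the paper's, which simply cites a convex piece of the hexagonal dimer lattice from Adamaszek \cite{Adam1} and the exponential lower bound proved there; the specific constants $40$ and $36$ in the statement are artifacts of that lattice rather than anything the present paper proves. Your disjoint union of $K_3$'s padded with a few $K_2$'s is certainly planar, and the reduced K\"unneth formula for joins does give $\mathbf{b}(G\sqcup G')=\mathbf{b}(G)\,\mathbf{b}(G')$ over a field, so $\mathbf{b}$ of a union of $j$ triangles and any number of edges is exactly $2^{j}$. That already beats the target exponent $(m-40\sqrt m)/36$ by roughly a factor of $12$, so your route is both simpler (no external lattice computation) and quantitatively stronger; the price is that $P$ is disconnected, whereas the lattice piece gives a connected example.

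Two small points should be tightened. Your appeal to Lemma~\ref{lem:isoV} is in the wrong direction: it says that an isolated vertex forces $\mathbf{c}=0$, not that the absence of an isolated vertex forces $\mathbf{b}\geq1$ (a path on four vertices has no isolated vertex and $\mathbf{b}=0$). What you actually need, and do have, is the specific fact that your $H$ is a disjoint union of $K_3$'s and $K_2$'s, for which $\mathbf{b}(H)=2^{\#\text{triangles}}\geq1$ by the multiplicativity you established. Second, the statement genuinely fails at $m=1$: any one-vertex subgraph has $\mathbf{b}=0$, while $2^{(1-40)/36}\approx0.47>0$. This is not a defect of your approach in particular — the paper's lattice construction has the same hole at $m=1$ — but since you explicitly discuss the small-$m$ regime, it is worth flagging. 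For $m\geq 2$ the residue-class bookkeeping you defer is routine, provided you arrange $P$ with at least two $K_2$ components so that the case $(n-m)\equiv 2\pmod 3$ never forces you to strip a triangle down to a single vertex.
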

\begin{proof}
Let $P$ be a convex piece of the hexagonal dimer lattice in Figure 1 of Adamaszek's paper \cite{Adam1}, and $H$ a convex piece of that. The ${\mathbf{b}}(P)$ estimate is from Proposition 4.1 of that paper.
\end{proof}

Any subgraph on $m$ vertices of $X^{p,q}$ would have at most $\frac{m}{2 \log_p q}$ vertex disjoint cycles due to the bound on the girth. Applying Corollary~\ref{cor:main} to it and comparing with the bound in the previous proposition proves the following.

\begin{proposition}
Let $\chi$ be a positive integer. There is a Ramanujan graph $G$ of order $n$ and chromatic number at least $\chi,$ and a planar graph $P$ of order $n,$ such that for every subgraph $G'$ of $G$ with at most  
\[
\frac{ \log_2 n }{3  \log_2 \chi }
n^{0.003( \log_2 \chi)^{-1}}
\]
vertices, there is a subgraph $P'$ of $P$ of the same order as $G'$ with
\[
{\mathbf{b}}(P') > {\mathbf{b}}(G').
\]
\end{proposition}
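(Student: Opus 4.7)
The plan is to instantiate the Ramanujan construction from Theorem~\ref{theorem:ramanujan} with parameters tuned to the target chromatic number, compute the resulting girth in terms of $n$ and $\chi$, bound the Betti number of any small subgraph through Corollary~\ref{cor:main}, and compare the exponent with the planar lower bound from the preceding proposition. All estimates are asymptotic but the constant $0.003$ leaves enough slack that no clever manipulation is required.

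First I would pick a prime $p$ of order $4\chi^2$ so that $(p+1)/(2\sqrt{p}) \geq \chi$, and then a sufficiently large prime $q$ with $(p/q)=1$; this produces $G = X^{p,q}$ of order $n = q(q^2-1)/2$ with chromatic number at least $\chi$ and girth at least $g := 2\log_p q$. Writing $L = \log_2 n$ and $C = \log_2 \chi$, one has $\log_2 q = (1+o(1))L/3$ and $\log_2 p = (2+o(1))C$, so $g \geq (1+o(1))\, L/(3C)$. For $P$ I would take the planar hexagonal-dimer graph from the previous proposition, which for every $m \le n$ admits an induced subgraph $P'$ on $m$ vertices with $\mathbf{b}(P') \geq 2^{(m-40\sqrt{m})/36}$.

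Next, fix an induced subgraph $G' \subseteq G$ on $m$ vertices, with $m$ at most the bound stated in the proposition. Since every cycle in $G'$ uses at least $g$ vertices, $G'$ has at most $k := \lfloor m/g \rfloor$ vertex-disjoint cycles. Applying Corollary~\ref{cor:main} and taking $\log_2$,
\[
\log_2 \mathbf{b}(G') \leq (2+o(1))\, k \log_2 k \cdot \log_2 \varphi,
\]
where $\varphi = (1+\sqrt{5})/2$ and $\log_2 \varphi < 0.695$. Using $k \leq m/g$ this is at most $(1.39+o(1))(m/g)\log_2(m/g)$. The hypothesis on $m$ gives $m/g \leq n^{0.003/C}$, hence $\log_2(m/g) \leq 0.003\, L/C$, so
\[
\log_2 \mathbf{b}(G') \leq (1.39+o(1))\cdot \frac{m}{g} \cdot \frac{0.003\, L}{C} \leq (0.00417+o(1))\, \frac{mL}{gC}.
\]
Meanwhile $\log_2 \mathbf{b}(P') \geq (1+o(1))\, m/36$, which is the bound I need to beat.

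So the final step is to verify
\[
\frac{m}{36} \;>\; (0.00417 + o(1))\, \frac{mL}{gC},
\]
i.e.\ $g > (0.15 + o(1))\, L/C$. Substituting $g \geq (1+o(1))L/(3C)$ makes this reduce to $1/3 > 0.15$, which holds with a comfortable factor, handling all $o(1)$ losses and the $40\sqrt{m}$ correction once $m$ (equivalently $n$) is taken large enough. The main obstacle, such as it is, is bookkeeping: pinning down that the same $m$ used on the Ramanujan side is admissible for $P$, and absorbing into the $o(1)$ terms both the defect between $p$ and $4\chi^2$ (requiring one to choose a prime close to $4\chi^2$ via Bertrand's postulate) and the discrepancy between $n$ and $q^3/2$; beyond those quasi-routine checks the comparison of exponents is purely arithmetic.
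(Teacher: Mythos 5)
Your proposal is correct and follows exactly the paper's argument: bound the number of vertex-disjoint cycles in an $m$-vertex subgraph by $m$ divided by the girth $2\log_p q$, apply Corollary~\ref{cor:main}, and compare the resulting exponent with the hexagonal-lattice bound from the preceding proposition. In fact you carry out the arithmetic (choice of $p\approx 4\chi^2$, the $\log_2\varphi$ bookkeeping, and the $1/3$ versus $0.15$ comparison) in more detail than the paper, which leaves these steps as a two-sentence sketch.
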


\end{document}